\newtheorem{Theorem}{Theorem}
\newtheorem{Lemma}{Lemma}
\DeclareMathAlphabet\mathbfcal{OMS}{cmsy}{b}{n}
\newcommand{\be}{\begin{equation}}
\newcommand{\ee}{\end{equation}}
\newcommand{\bea}{\begin{eqnarray}}
\newcommand{\eea}{\end{eqnarray}}
\newcommand{\beas}{\begin{eqnarray*}}
	\newcommand{\eeas}{\end{eqnarray*}}
\newcommand{\bbR}{\mathbb{R}}
\newcommand{\tF}{{\rm F}}
\newcommand{\A}{{\mathbf{A}}}
\newcommand{\cN}{{\cal N}}
\newcommand{\dist}{{\rm dist}}
\newcommand{\argmin}{\mathop{\rm arg\min}}
\newcommand{\argmax}{\mathop{\rm arg\max}}
\newcommand{\bbP}{\mathbb{P}}
\newcommand{\bbE}{\mathbb{E}}
\newcommand{\bbO}{\mathbb{O}}
\newcommand*{\rom}[1]{\expandafter\@slowromancap\romannumeral #1@}
\begin{document}

\title{Discussion of ``Vintage factor analysis with Varimax performs statistical inference"\footnote{Supported in part by NSF Grant CAREER-2203741 and NIH Grant R01 GM131399.}}

\author{Rungang Han\footnote{Department of Statistical Science, Duke University} ~ and ~ Anru R. Zhang\footnote{Departments of Biostatistics \& Bioinformatics, Computer Science, Mathematics, and Statistical Science, Duke University}}

\date{(\today)}

\maketitle
We wholeheartedly congratulate Drs. Rohe and Zeng for their insightful paper  \cite{rohe2020vintage} on vintage factor analysis with Varimax rotation. Varimax rotation is a basic scheme to simplify the expression of a particular subspace and is included in build-in standard packages \emph{stats} in R and \emph{PROC FACTOR} statement in SAS. Drs. Rohe and Zeng nicely show that the principal component analysis with Varimax rotation actually performs statistical inference for the explainable factors. 

Drs. Rohe and Zeng suggested leptokurtosis as a key identifiability condition for Varimax rotation; the number of factors often increases as the data dimension and sample size grow. It is thus natural to ask whether Varimax works with vanishing leptokurtosis and/or a growing number of factors. This note discusses when Varimax recovers the subspace rotation in such high-dimensional regimes. As a first step, we assume the factor matrix $Z \in \bbR^{n\times k}$ includes a collection of i.i.d. centered random variables satisfying
\begin{equation}\label{eq:X-moment}
    \mathbb{E}Z_{ij}= \mathbb{E}Z_{ij}^3 =0, \quad \mathbb{E}Z_{ij}^2 = 1, \quad \mathbb{E}Z_{ij}^4 = \kappa > 3.
\end{equation}
We also assume $Z_{ij}$'s are sub-Gaussian such that $\bbE \exp\left(\lambda Z_{ij}\right) \leq e^{c\lambda^2}, \forall \lambda \in \bbR$ for some constant $c>0$. Let $\hat Z\in \mathbb{R}^{p\times r}$ be the observed factors generated as
\begin{equation*}
    \hat Z = Z R^*,
\end{equation*}
where $R^*$ is an unknown $k$-dimensional orthogonal matrix that represents the rotation to be recovered. Due to vanishing mean of $Z$, we focus on the following centered Varimax:
\begin{equation*}
    \hat R = \argmax_{R \in \mathcal O(k)} \sum_{i=1}^n\sum_{j=1}^k \left((\hat Z R^\top)_{ij}\right)^4,
\end{equation*}
where $\mathcal O(k)$ is the set of $k$-by-$k$ orthogonal matrices. Consider the following error metric:
\begin{equation*}
\dist(R^*,\hat R) = \min_{P \in \mathcal{P}(k)}\frac{\|\hat{R} - PR^*\|_\tF}{\|R^*\|_\tF} = k^{-1/2}\min_{P \in \mathcal{P}(k)} \|\hat{R} R^{*\top} - P\|_\tF,
\end{equation*}
where $\mathcal P(k) = \mathcal{O}(k)\cap \{0, \pm1\}^{k^2}$ is the set of orthogonal matrices that allow for column reordering and sign changes as defined in Eqn. (12) in \cite{rohe2020vintage}. The following theorem characterizes the conditions of $n, k, \kappa$ under which Varimax works or fails.
\begin{Theorem}\label{thm:nonasymptotics}
    Let $\delta \in (0,1/2]$ be any fixed value and $\kappa \leq C_0$ for some universal constant $C_0 > 3$.
    \begin{itemize}
        \item If $n \gtrsim \max\left\{\frac{k\log n}{(\kappa-3)^2}, \frac{k^2\log^2 n }{\kappa - 3}\right\}$, then
        \begin{equation*}
            \limsup_{n \rightarrow \infty} \bbP\left(\dist(R^*, \hat R) \geq \delta\right) = 0; 
        \end{equation*}
        \item if $k < n \lesssim k^2$, then
        \begin{equation*}
            \liminf_{n, k \rightarrow \infty} \bbP\left(\dist(R^*, \hat R) \geq \delta\right) = 1.
        \end{equation*}
    \end{itemize}
\end{Theorem}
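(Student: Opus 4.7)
The plan is to reparametrize by $Q = R^* R^\top \in \mathcal O(k)$ and study $F(Q) = \sum_{i,j}(Z_{i\cdot} q_j)^4$ (with $q_j$ the $j$-th column of $Q$), which equals the Varimax objective at $R$ via $\hat Z R^\top = Z R^* R^\top = ZQ$. The moment assumptions $\bbE Z_{ij} = \bbE Z_{ij}^3 = 0$, $\bbE Z_{ij}^2 = 1$, $\bbE Z_{ij}^4 = \kappa$ give the signal decomposition
\begin{equation*}
\bbE F(Q) = 3nk + n(\kappa - 3)\sum_{l,j} Q_{lj}^4,
\end{equation*}
with $\sum_{l,j} Q_{lj}^4 \leq k$ and equality exactly on $\mathcal P(k)$. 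Since $\|q_l\|_4^4$ has nondegenerate Hessian on the unit sphere at each signed coordinate vector, and orthogonality of $Q$ prevents distinct rows from simultaneously approaching the same signed unit, one obtains the quadratic gap $k - \sum_{l,j} Q_{lj}^4 \gtrsim k\,\dist(R^*, \hat R)^2$. Consequently the population signal gap between any signed-permutation rotation and any rotation at distance $\delta$ is at least of order $n(\kappa - 3)k\delta^2$.

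For the positive direction, the remaining task is to control $\sup_{Q\in\mathcal O(k)}|F(Q) - \bbE F(Q)|$. Pointwise, $F(Q) - \bbE F(Q) = \sum_i(g_i(Q) - \bbE g_i(Q))$ with $g_i(Q) = \sum_j(Z_{i\cdot} q_j)^4$ a degree-four polynomial in the sub-Gaussian row $Z_{i\cdot}$. A Hanson--Wright/Latała-type inequality for polynomial chaos yields a two-regime tail of the form $\exp(-c\min(t^2/(nk), (t/k^3)^{1/2}))$, the variance scale $nk$ using the orthogonal isometry $\|Z_{i\cdot}Q\|_2 = \|Z_{i\cdot}\|_2$ to keep $\var(g_i) = O(k)$. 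An $\epsilon$-net over $\mathcal O(k)$ has log-cardinality $\asymp k^2\log(1/\epsilon)$; combined with a high-probability Lipschitz bound for $F$ on the event $\max_i \|Z_{i\cdot}\|_2^2 \lesssim k\log n$, this upgrades the pointwise tail to the uniform bound $\sup_Q|F(Q) - \bbE F(Q)| \lesssim \sqrt{nk^3\log n} + k^3\log^2 n$. Matching this against $n(\kappa-3)k\delta^2$ separately in the two regimes yields exactly the stated thresholds $n \gtrsim k\log n/(\kappa-3)^2$ and $n \gtrsim k^2\log^2 n/(\kappa-3)$.

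For the negative direction, I would perform a local Taylor expansion around each $P \in \mathcal P(k)$. Parametrize $Q = P\exp(\eta A)$ with $A$ skew-symmetric of unit Frobenius norm, and expand $F(Q) - F(P)$ in $\eta$. The expectation contributes the leptokurtic restoring drift $-cn(\kappa-3)\eta^2$, while the centered linear-in-$\eta$ term is a mean-zero random linear functional of $A$ on the $k(k-1)/2$-dimensional skew-symmetric space; crucially, the cancellation $\bbE Z^3 = 0$ keeps its coordinate variance at $O(n)$. A sub-Gaussian maximal lower bound then gives $\sup_A |\text{linear term}| \gtrsim \eta k\sqrt n$ with probability bounded away from zero. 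Optimizing the competition between the random linear gain and the deterministic drift, and combining with a global comparison against the signal plateau at non-permutation rotations, forces the Varimax maximizer out of every $\delta$-ball around $\mathcal P(k)$ when $n \lesssim k^2$. A union bound over the $2^k k!$ signed permutations is absorbed by the $k^2$-dimensional supremum tail.

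The main obstacle is the sharp uniform concentration of the degree-four chaos in the positive direction: holding the per-row variance at $O(k)$ rather than $O(k^2)$ and controlling the Latała sub-Weibull correction at $k^3\log^2 n$ both depend delicately on the orthogonal isometry and on standard decoupling for polynomial chaos. On the lower-bound side, the subtle step is verifying that the covariance of the random linear functional on the skew-symmetric space has effective rank $\Omega(k^2)$, so that its supremum genuinely scales like $k\sqrt n$ rather than $\sqrt{nk}$; combined with a careful global comparison, this should upgrade the local displacement to $\dist \geq \delta$ uniformly across the full range $k < n \lesssim k^2$.
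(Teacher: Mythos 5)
Your reparametrization $Q=R^*R^\top$, the expectation formula $\bbE F(Q)=3nk+n(\kappa-3)\sum_{l,j}Q_{lj}^4$, and the quadratic gap $k-\sum_{l,j}Q_{lj}^4\gtrsim k\,\dist^2$ all match the paper's starting point (the latter is the paper's Lemma~\ref{lm:metric-equivalence}). But the concentration step in your positive direction has a real gap. You propose to $\epsilon$-net directly over $\mathcal O(k)$ (log-cardinality $\asymp k^2\log n$) and combine this with a per-$Q$ chaos tail of the form $\exp(-c\min(t^2/(nk),(t/k^3)^{1/2}))$. Matching the sub-Weibull branch of that tail to exponent $k^2\log n$ gives a correction of order $k^3(k^2\log n)^2=k^7\log^2 n$, not the $k^3\log^2 n$ you assert; even with the more favorable per-row $\psi_{1/2}$-scale $O(k)$ for $g_i=\sum_j(Z_{i\cdot}q_j)^4$ the sub-Weibull term comes out as $k\cdot(k^2\log n)^2=k^5\log^2n$. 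The paper reaches $k^3\log^2 n$ by a decomposition you have dropped: it writes the Varimax objective as a sum over the $k$ columns, $v(\hat Z;R)=\sum_j v(Z;a_j)$, and nets only over the unit sphere $\mathbb S^{k-1}$ (log-cardinality $\asymp k\log n$). For fixed $a\in\mathbb S^{k-1}$ the quantity $nv(Z;a)=\sum_i(Z_{i\cdot}a)^4$ is a sum of $n$ iid scalar sub-Weibull$(1/2)$ variables with $O(1)$ scale, so Lemma~\ref{lm:concentration-fourth-moment} gives threshold $\sqrt{nk\log n}+k^2\log^2 n$ per column, and summing the $k$ columns yields $\sqrt{nk^3\log n}+k^3\log^2 n$. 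Your route discards exactly this column-wise structure, and the larger net plus the larger per-point $\psi_{1/2}$-scale compound to a strictly weaker bound, which would degrade your second sufficient threshold from $k^2\log^2 n/(\kappa-3)$ to something like $k^4\log^2 n/(\kappa-3)$.

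Your negative direction is a genuinely different strategy from the paper's. The paper constructs an explicit adversarial rotation $A$ from the QR decomposition of the first $\lceil k/2\rceil$ rows of $Z$, so that $\hat Z A^\top$ is upper-triangular on those rows; AM-GM on the diagonal entries and a smallest-singular-value bound give a deterministic chunk $D_1\gtrsim k^3$ in the Varimax value, which beats every rotation within distance $\delta$ of $R^*$ once $n\lesssim k^2$. You instead propose a local perturbation $Q=P\exp(\eta A)$ around each $P\in\mathcal P(k)$ and argue that the random linear-in-$\eta$ term, with supremum over skew-symmetric $A$ of order $\eta k\sqrt n$, overwhelms the deterministic restoring drift $-cn(\kappa-3)\eta^2$ when $n\lesssim k^2$. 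This is a plausible and arguably cleaner fluctuation-dominance argument, but as written it needs two things filled in that the paper's explicit construction sidesteps: a proof that the covariance of the random linear functional on the skew-symmetric space has effective rank $\Omega(k^2)$ (which you flag yourself), and a global comparison showing the local displacement cannot be undone by the behavior of $F$ far from $\mathcal P(k)$ -- the local analysis alone only rules out exact recovery, not $\dist\geq\delta$ for fixed $\delta$.
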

If the kurtosis $\kappa>3$ is a constant, Theorem \ref{thm:nonasymptotics} suggests that a necessary and nearly sufficient condition for consistent rotation recovery is $n \gtrsim k^2$ -- this bound is tight up to $\log^2(n)$. Theorem \ref{thm:nonasymptotics} also shows when the leptokurtosis of factors is insignificant, i.e., $\kappa \rightarrow 3^+$, a sufficient sample size to ensure rotation recovery by Varimax is $\max\{k\log(n)/(\kappa-3)^2, k^2\log^2(n)/(\kappa-3)\}$, while it is unclear if this bound is sharp. It is of future interest to investigate the tight condition that guarantees the consistency of Varimax.

\begin{proof}[Proof of Theorem \ref{thm:nonasymptotics}]
For any random matrix $Z$ and fixed $a \in \bbR^k$, define the following stochastic functions
\begin{equation}\label{eq:mini-varimax}
    v(Z;a) = \frac{1}{n}\sum_{i=1}^n\left(\sum_{j=1}^ka_{j}Z_{ij}\right)^4.
\end{equation}
Note that the Varimax object function can be then written as $v(\hat Z; R^\top) = \sum_{j=1}^kv(\hat Z; R_{j:})$ where $R_{j:}$ if the $j$th row of $R$. Our proof relies on the following lemmas.
\begin{Lemma}\label{lm:metric-equivalence}
    Suppose $A \in \mathcal O(k)$ and $\min_{P \in \mathcal P(k)}\left\|A - P\right\|_\tF > t$ for some $t>0$. Then,
    \begin{equation*}
        \sum_{i=1}^k\left(\sum_{j=1}^k A_{ij}^4 - 1\right) \leq -\frac{1}{16}t^2.
    \end{equation*}
\end{Lemma}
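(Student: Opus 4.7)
My plan is to prove the contrapositive: if the quartic deficit $s := \sum_{i=1}^{k}\bigl(1 - \sum_{j=1}^{k} A_{ij}^{4}\bigr)$ is small, then I can construct a signed permutation $P \in \mathcal{P}(k)$ with $\|A - P\|_\tF^2 \leq 6 s$, which forces $s \geq t^2/6 \geq t^2/16$ under the lemma's hypothesis.

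The first step is to relate the quartic deficit to the row maxima. Set $M_i := \max_j |A_{ij}|$. Using $A_{ij}^4 \leq M_i^2 A_{ij}^2$ for every $j$ together with the row constraint $\sum_j A_{ij}^2 = 1$ yields $\sum_j A_{ij}^4 \leq M_i^2$, and hence $\sum_{i=1}^k (1 - M_i^2) \leq s$. This is the workhorse estimate tying what we observe (the quartic sum) to what we need to control (the row maxima).

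Next I would construct $P$ explicitly. Call row $i$ \emph{good} if $M_i^2 > 1/2$ and \emph{bad} otherwise; let $S$ denote the set of bad rows. On a good row the argmax column $j_i^\ast$ is unique, and the map $i \mapsto j_i^\ast$ is injective on good rows, because two good rows sharing a column $j$ would force $\sum_\ell A_{\ell j}^2 > 1$, contradicting the orthonormality of the \emph{columns} of $A$. Extend this partial injection arbitrarily to a permutation $\pi$ on $\{1,\dots,k\}$ mapping $S$ onto the remaining columns, and set $P_{i,\pi(i)} = \operatorname{sign}(A_{i,\pi(i)})$ with zeros elsewhere. Since each bad row contributes at least $1/2$ to $\sum_i (1 - M_i^2) \leq s$, we obtain $|S| \leq 2s$. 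Using $\|A_{i:} - P_{i:}\|^2 = 2(1 - M_i) \leq 2(1 - M_i^2)$ on good rows and the trivial bound $\|A_{i:} - P_{i:}\|^2 \leq 2$ on bad rows yields
\begin{equation*}
    \|A - P\|_\tF^2 \;\leq\; 2\sum_{i \notin S} (1 - M_i^2) + 2|S| \;\leq\; 2 s + 4 s \;=\; 6 s,
\end{equation*}
completing the estimate, which is more than enough for the advertised factor of $16$.

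The most delicate point is the injectivity of the argmax map on good rows; this is where the orthonormality of \emph{columns} of $A$ (not only rows) is essential, and without it the conclusion of the lemma would fail. The remaining work is bookkeeping once the good/bad dichotomy is set up: the good rows are handled by direct computation, while the count of bad rows is controlled by the fact that each such row forces $1 - M_i^2 \geq 1/2$, eating a definite chunk of the budget $s$.
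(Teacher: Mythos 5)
Your proof is correct and takes a genuinely different route from the paper's. The paper argues directly: after normalizing the nearest signed permutation to the identity with $A_{11}\geq\cdots\geq A_{kk}\geq 0$, it proves the row-wise bound $\sum_j A_{ij}^4 - 1 \leq -\tfrac{1}{8}(1-A_{ii}^2)$ via a case split on whether $A_{ii}^2\geq 1/16$, handling the small-diagonal case by a contradiction argument that exploits the optimality of $I$ against a competing transposition $D^{(i,l)}$; it then combines this with $t^2 < 2\sum_i(1-A_{ii}^2)$. You instead argue by contrapositive and build a signed permutation $P$ explicitly from the row maxima $M_i$: the chain $\sum_j A_{ij}^4 \leq M_i^2$, the good/bad dichotomy at threshold $M_i^2 > 1/2$, injectivity of the argmax map on good rows (which is exactly where column orthonormality enters), and the bookkeeping $\|A-P\|_\tF^2\leq 6s$. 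Both are elementary, but your version is more constructive, makes the role of column orthonormality explicit rather than hiding it inside the optimality-of-$I$ step, and delivers the sharper constant $1/6$ in place of $1/16$; the paper's version, by pinning down $P^*=I$, avoids having to construct a permutation and keeps the argument row-local. One minor wording note: uniqueness of the argmax on a good row is not actually needed for injectivity (any choice of argmax column works), though it is true and harmless to observe.
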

\begin{Lemma}\label{lm:concentration-varimax}
    Suppose $Z$ is a $n$-by-$k$ random matrix with i.i.d. mean-zero, unit-variance and sub-Gaussian entries. Then, with probability at least $1-n^{-ck}-n^{-4}$, 
    \begin{equation*}
        \sup_{a: \|a\|_2 \leq 1} v(Z;a) - \bbE v(Z;a) \lesssim \sqrt{\frac{k\log n}{n}} + \frac{k^2\log^2 n}{n}.
    \end{equation*}
\end{Lemma}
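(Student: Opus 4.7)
The plan is to combine a pointwise sub-Weibull Bernstein inequality with an $\epsilon$-net argument on the ball $\{a:\|a\|_2\le 1\}$. For the pointwise bound, fix $a$ with $\|a\|_2\le 1$. Since the $Z_{ij}$'s are independent sub-Gaussian with $\|Z_{ij}\|_{\psi_2}\lesssim 1$, the linear combination $Y_i:=\sum_j a_jZ_{ij}$ is sub-Gaussian with $\|Y_i\|_{\psi_2}\lesssim \|a\|_2\le 1$ by the sum rule for sub-Gaussian norms. Hence $Y_i^4=(Y_i^2)^2$ is sub-Weibull of index $1/2$ with $\|Y_i^4\|_{\psi_{1/2}}\lesssim 1$. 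The Bernstein-type inequality for sums of independent sub-Weibull-$1/2$ variables then gives, for every $u>0$,
\[\bbP\!\left(\left|v(Z;a)-\bbE v(Z;a)\right|>C\big(\sqrt{u/n}+u^2/n\big)\right)\le 2e^{-u}.\]
Setting $u=c_1 k\log n$ yields $|v(Z;a)-\bbE v(Z;a)|\lesssim \sqrt{k\log n/n}+k^2\log^2 n/n$ with probability at least $1-2n^{-c_1k}$.

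For the net argument, I would construct a $\tau$-net $\mathcal N_\tau$ of the unit ball with $|\mathcal N_\tau|\le(3/\tau)^k$ and establish a data-dependent Lipschitz bound on $a\mapsto v(Z;a)$. Using $y^4-z^4=(y-z)(y^3+y^2z+yz^2+z^3)$ together with $|Y_{a,i}|\le\|Z_{i:}\|_2$ on the unit ball,
\[|v(Z;a)-v(Z;a')|\le \frac{4\|a-a'\|_2}{n}\sum_{i=1}^n\|Z_{i:}\|_2^4.\]
A Bernstein bound on the sub-exponential variable $\|Z_{i:}\|_2^2-k$ together with a union bound over $i\in[n]$ gives $\max_i\|Z_{i:}\|_2^2\lesssim k+\log n$ on an event $E_1$ of probability at least $1-n^{-4}$, so $n^{-1}\sum_i\|Z_{i:}\|_2^4\lesssim k^2+\log^2 n$ on $E_1$. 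The analogous Lipschitz bound, with constant $\lesssim \bbE\|Z_{i:}\|_2^4\lesssim k^2$, holds for $\bbE v(Z;a)$. Choosing $\tau=[n(k^2+\log^2 n)]^{-1}$ keeps the net-approximation error at $\lesssim 1/n$ while ensuring $\log|\mathcal N_\tau|\lesssim k\log n$.

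Finally, picking $c_1$ large enough that $|\mathcal N_\tau|\cdot 2e^{-c_1 k\log n}\le n^{-ck}$, the pointwise bound holds simultaneously for every $a\in\mathcal N_\tau$ with probability at least $1-n^{-ck}$; intersecting with $E_1$ and invoking the Lipschitz estimate then lifts the inequality from $\mathcal N_\tau$ to the whole unit ball, yielding the claimed probability $1-n^{-ck}-n^{-4}$. The main delicacy is the sub-Weibull Bernstein step: a naive sub-exponential treatment of $Y_i^4$ would yield only the tail $\sqrt{u/n}+u/n$ and miss the $k^2\log^2 n/n$ term entirely, whereas recognizing $Y_i^4$ as genuinely $\psi_{1/2}$ is what produces the heavy-tail $u^2/n$ contribution once $u\asymp k\log n$ is plugged in.
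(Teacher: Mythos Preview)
Your proposal is correct and follows essentially the same strategy as the paper: an $\varepsilon$-net on the sphere, a pointwise sub-Weibull($1/2$) Bernstein bound (the paper invokes this as Lemma~\ref{lm:concentration-fourth-moment}) with $u\asymp k\log n$ and a union bound over the net, and a data-dependent Lipschitz estimate to pass from the net to the full sphere. The only cosmetic difference is in the Lipschitz step: the paper bounds $|v(Z;a)-v(Z;a')|$ through the crude entrywise maximum $\max_{i,j}|Z_{ij}|^4\lesssim\log^2 n$ (with a fixed $\varepsilon=1/n$), whereas you use the row norms $\|Z_{i:}\|_2^4\lesssim (k+\log n)^2$ and a correspondingly smaller $\tau$; both routes keep the net approximation error at $O(1/n)$ and the net cardinality at $e^{O(k\log n)}$, so the final bound and probability are identical.
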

\begin{Lemma}\label{lm:concentration-fourth-moment}
	Suppose $X_1,\ldots, X_p$ are independent random variables with mean zero, variance one, and sub-Gaussian tails. Then there exists constants $C, c>0$ such that 
	\begin{equation*}
	\bbP\left(\left|\sum_{i=1}^p (X_i^4 - \mathbb{E} X_i^4)\right| \geq C\sqrt{pt} + Ct^2 \right) \leq 2\exp(-ct). 
	\end{equation*}
\end{Lemma}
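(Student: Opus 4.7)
The plan is to recognize $X_i^4$ as a sub-Weibull random variable of index $1/2$ and then invoke a Bernstein-type deviation inequality for sums of independent sub-Weibull$(1/2)$ variables; the two-term form $C\sqrt{pt}+Ct^2$ in the statement is precisely the Gaussian-plus-Weibull-$(1/2)$ tail such inequalities produce. The sub-Gaussian hypothesis $\bbE e^{\lambda X_i}\le e^{c\lambda^2}$ is equivalent to the moment bound $\bbE|X_i|^m\le(Cm)^{m/2}$ for every integer $m\ge 1$; substituting $m\mapsto 4m$ gives $\bbE|X_i|^{4m}\le(C'm)^{2m}$, hence
\[
\|X_i^4\|_{L^m}=(\bbE|X_i|^{4m})^{1/m}\le C'm^{2},
\]
which is the moment-growth characterization of sub-Weibull of index $1/2$. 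Setting $Y_i:=X_i^4-\bbE X_i^4$, the triangle inequality gives $\|Y_i\|_{\psi_{1/2}}\le K$ for an absolute constant $K$ depending only on $c$, and $\bbE Y_i^2\le\bbE X_i^8\le C$, so the total variance $\sigma^2:=\sum_{i=1}^p\bbE Y_i^2$ is at most $Cp$.

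A Bernstein-type deviation inequality for independent sub-Weibull$(1/2)$ sums then yields
\[
\bbP\!\left(\Bigl|\sum_{i=1}^p Y_i\Bigr|\ge s\right) \le 2\exp\Bigl(-c'\min\bigl(s^2/\sigma^2,\,(s/K)^{1/2}\bigr)\Bigr).
\]
Requiring the right-hand side to be $\le 2e^{-ct}$ forces $s\ge c''\sigma\sqrt{t}+c''K\,t^2$; since $\sigma^2\lesssim p$ and $K\lesssim 1$, this rearranges to $s\ge C\sqrt{pt}+Ct^2$, which matches the claim.

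The only substantive step is deriving the sub-Weibull$(1/2)$ Bernstein bound, because a direct MGF/Chernoff argument is unavailable: $Y_i$ has heavier-than-exponential tails, so $\bbE e^{\lambda|Y_i|}=\infty$ for every $\lambda>0$. The standard workaround is truncation: decompose $Y_i=Y_i\mathbf{1}\{|Y_i|\le T\}+Y_i\mathbf{1}\{|Y_i|>T\}$ at a level $T$, apply the classical sub-exponential Bernstein bound to the bounded part (whose MGF now exists) and control the tail part via a union bound using $\bbP(|Y_i|>t)\le 2\exp(-(t/K)^{1/2})$; optimizing $T$ against the target deviation $s$ balances the two contributions and produces the claimed two-regime bound.
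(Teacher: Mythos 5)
Your argument is correct and is essentially the same as the paper's: identify $X_i^4$ as sub-Weibull of index $1/2$ and invoke a Bernstein-type concentration inequality for sums of independent sub-Weibull$(1/2)$ random variables (the paper cites this as Theorem 3.1 of Kuchibhotla and Chakrabortty, whose proof is exactly the truncation strategy you sketch). The only cosmetic difference is that the paper's proof is a one-line citation while you additionally spell out the moment-growth equivalence and the rearrangement of the two-regime bound into the stated $C\sqrt{pt}+Ct^2$ form.
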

We start by proving the sufficient condition. Define
\begin{equation}\label{eq:def-delta}
    \delta := \frac{C_0}{\sqrt{\kappa-3}}\max\left\{\left(\frac{k^2\log^2 n}{n}\right)^{1/2}, \left(\frac{k\log n}{n}\right)^{1/4}\right\},
\end{equation}
for some positive constant $C_0$ to be specified later. It suffices to show that with high probability, for any $R \in \mathcal O(k)$ such that $\min_{P \in \mathcal P(k)}\left\|R^*R^\top - P\right\|_\tF > \delta k^{1/2}$, $R$ cannot be the solution of Varimax.

By Lemma \ref{lm:concentration-varimax}, we know with probability at least $1-o(1/n)$,
\begin{equation}\label{ineq:concentration-varimax}
    \sup_{a: \|a\|_2 \leq 1} v(Z;a) - \bbE v(Z;a) \leq C\left(\sqrt{\frac{k\log n}{n}} + \frac{k^2\log^2 n}{n}\right) \leq c(\kappa-3),
\end{equation}
where $c$ is some small positive constant. In the meantime,  Define $A =  RR^{*\top} = (a_1,a_2,\ldots,a_k)$, where $a_j$ is the $j$-th orthogonal column of $A$. 
Note that the Varimax loss function can be written as $v(\hat Z; R) = \sum_{j=1}^k v(Z;a_j)$. By calculating its expectation, we have:
\begin{equation}\label{ineq:expect-varimax-U}
    \begin{split}
        \bbE v(\hat Z; R) & = \bbE\sum_{j=1}^k v(Z;a_j) = 3k + (\kappa - 3)\sum_{i=1}^k\sum_{j=1}^k a_{ij}^4 \\
        & = \kappa k + (\kappa - 3)\sum_{i=1}^k\left(\sum_{j=1}^k a_{ij}^4 - 1\right) \overset{\text{Lemma \ref{lm:metric-equivalence}}}{\leq}\kappa k - \frac{(\kappa - 3)\delta^2}{16}k.
    \end{split}
\end{equation}
Meanwhile, note that
\begin{equation}\label{ineq:expect-varimax-V}
    \begin{split}
        \bbE v(\hat Z,R^*) & = \kappa k.
    \end{split} 
\end{equation}
Then we have
\begin{equation*}
    \begin{split}
        v(\hat Z; R^*) - v(\hat Z; R) &= \left(\bbE v(\hat Z; R^*) - \bbE v(\hat Z; R)\right) + \left(v(\hat Z; R^*) - \bbE(\hat Z; R^*)\right) - \left(v(\hat Z; R) - \bbE(\hat Z; R)\right) \\
        & \overset{\eqref{ineq:expect-varimax-U},\eqref{ineq:expect-varimax-V}}{\geq} \frac{(\kappa-3)\delta^2}{16}k - 2k\sup_{a:\|a\| \leq 1}\left|v(Z;a) - \bbE v(Z;a)\right|\\
        & \overset{\eqref{eq:def-delta},\eqref{ineq:concentration-varimax}}{>} 0.
    \end{split}
\end{equation*}
Therefore, $R$ is not a solution of varimax. In other words, given $\hat R = \argmax_{R \in \mathcal O(k)} v(\hat Z, R)$, one must have $\min_{P \in \mathcal P(k)}\left\|R^*R^\top - P\right\|_\tF < \delta k^{1/2}$, which completes the proof of sufficient condition.

Next, we prove the necessary condition. Let $Z_1 \in \bbR^{\lceil k/2 \rceil \times k}$ be the first $\lceil k/2 \rceil$ rows of the factor matrix $Z$. Consider the QR-decomposition of $Z_1^\top$: $Z_1^\top = U_1 R_1$, where $U_1$ is a $k$-by-$\lceil k/2 \rceil$ orthogonal matrix and $R_1$ is an $\lceil k/2 \rceil$-by-$\lceil k/2 \rceil$ upper triangular matrix. Let $U_{1\perp} \in \bbO_{k, k- \lceil k/2 \rceil}$ be the perpendicular subspace of $U_1$ (such that $U_1^\top U_{1\perp} = 0$), and denote $A = [U_1~U_{1\perp}]^\top R^*$. Then,
\begin{equation*}
    (\hat ZA^\top)_{[1:\lceil k/2 \rceil,:]} = Z_1 [U_1~U_{1\perp}] = [R_1^\top~O]
\end{equation*}
and it follows that
\begin{equation}\label{ineq:bad-solution}
    \begin{split}
        v(\hat Z, A) & = \frac{1}{n}\sum_{i=1}^n\sum_{j=1}^k \left((\hat Z A^\top)_{ij}\right)^4 \\
        & \geq \frac{1}{n}\left(\underbrace{\sum_{j=1}^{\lceil k/2 \rceil} (R_1)_{jj}^4}_{D_1} + \underbrace{\sum_{i=k+1}^n\sum_{j=1}^k \left((\hat Z A^\top)_{ij}\right)^4}_{D_2}\right).
    \end{split}
\end{equation}
Now we establish the probabilistic bounds for $D_1$ and $D_2$.
\begin{itemize}
    \item $D_1$. By random matrix theory~\cite{vershynin2010introduction}, we know that with probability at least $1-e^{-ck}$ that
\begin{equation}\label{ineq:lb-min-singular-value}
    \sigma_{\lceil k/2 \rceil}(Z_1) \geq \sqrt{k} - \sqrt{\lceil k/2 \rceil} - c\sqrt{k} \geq \sqrt{0.1k}.
\end{equation}
Note that $R_1$ shares the same non-zeros singular values with $X_1$. Thus it follows that
\begin{equation*}
    \prod_{j=1}^{\lceil k/2 \rceil} (R_1)^2_{jj} = |\det(R_1)|^2 = \prod_{j=1}^{\lceil k/2 \rceil} \sigma_j^2(Z_1) \overset{\eqref{ineq:lb-min-singular-value}}{\geq} (0.1k)^{\lceil k/2 \rceil}.
\end{equation*}
Then by Inequality of arithmetic and geometric means,
\begin{equation}\label{ineq:lb-D1}
    D_1 = \sum_{j=1}^{\lceil k/2 \rceil} (R_1)_{jj}^4 \geq \lceil k/2 \rceil \left(\prod_{j=1}^{\lceil k/2 \rceil} (R_1)^4_{jj}\right)^{1/\lceil k/2 \rceil} \geq 0.005 k^3.
\end{equation}
    \item $D_2$. Denote $B:= R^*A^\top$ and we can rewrite 
    \begin{equation*}
        \begin{split}
        D_2 & = \sum_{i=k+1}^n\sum_{j=1}^k \left((\hat Z A^\top)_{ij}\right)^4 = \sum_{i=k+1}^n \sum_{j=1}^k\left((ZB)_{ij}\right)^4 \\
        & = \sum_{j=1}^k \sum_{i=k+1}^n \left(\sum_{l=1}^k B_{lj}Z_{il}\right)^4.
        \end{split}
    \end{equation*}
    Then, conditioning on $A$ (or $B$), we have
    \begin{equation*}
        \begin{split}
            \bbE[D_2 | A]  &=  \bbE\left[\sum_{j=1}^k \sum_{i=k+1}^n \left(\sum_{l=1}^k B_{lj}Z_{il}\right)^4 \bigg| B\right] \\
            & = \sum_{j=1}^k (n-k)\bbE\left[\left(\sum_{k=1}^k B_{lj}Z_{1l}\right)^4 \bigg| B\right] \\
            & = (n-k)\left(3 k + (\kappa-3)\sum_{l=1}^k\sum_{j=1}^k B_{lj}^4 \right) \\
            & \geq 3(n-k)k.
        \end{split}
    \end{equation*}
    On the other hand, for each fixed $j \in [k]$, $\left\{\sum_{l=1}^k B_{lj}Z_{il}\right\}_{i={k+1}}^n$ are i.i.d. mean-0, variance-1 sub-Gaussian random variables (since $B$ is independent of the last $n-k$ rows of $Z$). By Lemma \ref{lm:concentration-fourth-moment}, we have
    \begin{equation*}
        \bbP\left(\sum_{i=k+1}^n \left(\sum_{l=1}^k B_{lj}Z_{il}\right)^4 - \bbE\left[\sum_{i=k+1}^n \left(\sum_{l=1}^k B_{lj}Z_{il}\right)^4\bigg |A \right]  < -C(\sqrt{nt}+t^2)\bigg| A\right) \leq 2e^{-ct}.
    \end{equation*}
    Taking $t = c_0k$ for some small constant $c_0$ and applying union bound for each $j \in [k]$, we know that
    \begin{equation*}
        \bbP\left(D_2 - \bbE[D_2|A] < -c_0\left(\sqrt{nk^3}+k^3\right)\bigg| A\right) \leq 2ke^{-ck}
    \end{equation*}
    and it follows that
    \begin{equation*}
        \bbP\left(D_2 < 3(n-k)k -c_0\left(\sqrt{nk^3}+k^3\right)\bigg| \A\right) \leq 2ke^{-ck}.
    \end{equation*}
    Integrate over the density of $A$ and we obtain
    \begin{equation}\label{ineq:lb-D2-bound}
        \bbP\left(D_2 < 3(n-k)k -c_0\left(\sqrt{nk^3}+k^3\right)\right) \leq 2ke^{-ck} \leq e^{-c'k}.
    \end{equation}
\end{itemize}
In conclusion, we proved that with probability at least $1-e^{-ck}$, 
\begin{equation}\label{ineq:lb-D1D2-bound}
    \begin{split}
        D_1 \geq 0.005k^3,\qquad D_2 \geq 3(n-k)k-c_0\left(\sqrt{nk^3} + k^3\right).
    \end{split}
\end{equation}
Since $n \leq ck^2$, by taking $c_0$ to be sufficiently small, we have
\begin{equation}\label{ineq:lb-construct-lb}
    v(\hat Z, A) \geq \frac{1}{n}\left(0.004k^3 + 3nk\right).
\end{equation}

Next we show that for any $R$ such that $\dist(R,R^*) \leq \delta$, $v(\hat Z, R)$ is upper bounded by the right-hand side of \eqref{ineq:lb-construct-lb} with high probability. Note that
\begin{equation}\label{ineq:v(Y,U)-bound-1}
    \begin{split}
        v(\hat Z; R) & = \bbE v(\hat Z; R^*) + (v(\hat Z, R^*) - \bbE v(\hat Z; R^*)) + \left(v(\hat Z, R) - v(\hat Z, R^*)\right)   \\
        & = \kappa k + \left(v(\hat Z; R^*) - \bbE v(\hat Z; R^*)\right) + \sup_{R: \dist(R,R^*) \leq \delta} \left|v(\hat Z, R) - v(\hat Z, R^*)\right|.
    \end{split}
\end{equation}

Following the same argument as \eqref{ineq:lb-D2-bound}, we know that with probability at least $1-e^{-ck}$ that
\begin{equation*}
    v(\hat Z; R^*) - \bbE v(\hat Z, R^*) \leq \frac{c_0}{n}\left(\sqrt{nk^3}+k^3\right).
\end{equation*}
In the meantime, 
\begin{equation}\label{ineq:lb:fluct-1}
    \begin{split}
        \sup_{R: \dist(R,R^*) \leq \delta} \left|v(\hat Z, R) - v(\hat Z, R^*)\right| & = \sup_{\substack{\tilde R \in \mathcal O(k): \\ \|\tilde R - I\|_\tF \leq \delta \sqrt{k}}}  \left|\left(v(Z;\tilde R) - v(Z;I)\right)\right| \\
        & \leq \sup_{\substack{\tilde R \in \mathcal O(k): \\ \|\tilde R - I\|_\tF \leq \delta \sqrt{k}}} \sum_{i=1}^k \left|\left(v(Z;\tilde R_{[i,:]}) - v(Z; e_i)\right)\right|,
    \end{split}
\end{equation}
where $e_i$ is the $i$-th canonical basis in $\bbR^k$. Let $\delta_i:= \|\tilde R_{[i,:]} - e_i\|_2$. Then, by the proof of \eqref{ineq:D1-bound} in Lemma \ref{lm:concentration-varimax}, for each $i \in [k]$, we have
\begin{equation}\label{ineq:lb:fluct-2}
    \left|v(Z;\tilde R_{i,:}) - v(Z; \tilde e_i) \right| \leq 6\delta_i \max_{j,l}Z_{jl}^4 \leq C\delta_i \log^2 n 
\end{equation}
hold with probability at least $1-n^{-4}$. Combining \eqref{ineq:lb:fluct-1} and \eqref{ineq:lb:fluct-2}, we obtain
\begin{equation}\label{ineq:lb:fluct-3}
    \begin{split}
        \sup_{R: \dist(R,R^*) \leq \delta} \left|v(\hat Z, R) - v(\hat Z, R^*)\right| & \leq C\log^2 n \sum_{i=1}^k \delta_i  \leq C\log^2 n \left(k\sum_{i=1}^k\delta_i^2\right)^{1/2} \\
        & = C\sqrt{k}\log^2 n \left\|\tilde R- I\right\|_\tF  \leq C\delta k\log^2 n.
    \end{split}
\end{equation}
Thus, when $n \lesssim \frac{k^2}{\log^2 n}$, we have
\begin{equation*}
    \sup_{R:\dist(R,R^*) \leq \delta} v(\hat Z,R) \overset{\eqref{ineq:v(Y,U)-bound-1},\eqref{ineq:lb:fluct-3}}{\leq} \kappa k + \frac{c_0\left(\sqrt{nk^3} + k^3\right)}{n} + \delta k\log^2 n < \frac{0.004k^3 + 3nk}{n} \overset{\eqref{ineq:lb-construct-lb}}{\leq} v(\hat Z, A).
\end{equation*}
Here, the second inequality is obtained by choosing $c_0$ to be sufficiently small and $\delta = 1/2$. This suggests that the solution of Varimax function $\hat R$ must satisfy:
\begin{equation*}
    \dist(\hat R,R^*) \geq \delta = 1/2.
\end{equation*}
\end{proof}

\begin{proof}[Proof of Lemma \ref{lm:metric-equivalence}]
Let $P^* = \argmin_{P \in \mathcal O(k)} \|A - P\|_\tF^2$. Without loss of generality, we can assume $P^* = I$ and $A_{11} \geq A_{22} \geq \ldots \geq A_{kk} \geq 0$. This suggests that 
\begin{equation}\label{ineq:metric-relation}
    t^2 < \sum_{i=1}^k \left((1-A_{ii})^2 + \sum_{j\neq i}A_{ij}^2\right) = 2\sum_{i=1}^k (1-A_{ii}) \leq 2\sum_{i=1}^k (1-A_{ii}^2).
\end{equation}
It suffices to show that for each $i \in [k]$,
\begin{equation*}
    \sum_{j=1}^k A_{ij}^4 - 1 \leq -\frac{1}{8}(1-A_{ii}^2).
\end{equation*}
Now consider the following two situations for each $i \in [k]$:
\begin{itemize}
    \item $A_{ii}^2 \geq 1/16$. 
    \begin{equation*}
        \begin{split}
        \sum_{j=1}^k A_{ij}^4 - 1 & \leq A_{ii}^4 + \left(\sum_{j\neq i} A_{ij}^2\right)^2 - 1 \\
        & = A_{ii}^4 + \left( 1-A_{ii}^2\right)^2 - 1 \\
        & = 2 A_{ii}^2\left(A_{ii}^2 - 1\right)  \leq - \frac{1}{8}(1-A_{jj}^2).
        \end{split}
    \end{equation*}
    \item $A_{ii}^2 < 1/16$. We claim that $\max_j A_{ij}^2 < 3/4$. Suppose there exists $l \in [k]/i$ such that $A_{il}^2 \geq 3/4$. Since $I = \argmin_{P \in \mathcal P(k)} \|A - P\|_\tF^2$, we must have
    \begin{equation}\label{ineq:lm1-best-D}
        \|A - I\|_\tF^2 \leq \|A - D^{(i, l)}\|_\tF^2,
    \end{equation}
    where $D^{(i,l)}$ is the permutation matrix such that $D^{(i,l)}_{il} = \text{sgn}(A_{il})$, $D^{(i,l)}_{li} = \text{sgn}(A_{lj})$ and $D^{(i,l)}_{jj} = 1$ for $j \in [k]/\{i,l\}$. This yields
    \begin{equation*}
        (A_{ll}-1)^2 + (A_{ii}-1)^2 + A_{il}^2 + A_{li}^2 \leq (|A_{li}|-1)^2 + (|A_{il}|-1)^2 + A_{ii}^2 + A_{ll}^2,
    \end{equation*}
    which is equivalent to 
    \begin{equation*}
        |A_{li}| + |A_{il}| \leq A_{ll} + A_{ii}.
    \end{equation*}
    Then it follows that
    \begin{equation*}
        A_{ll} \geq |A_{il}| - A_{ii} \geq \frac{2\sqrt{3}-1}{4}.
    \end{equation*}
    Consequently, we get
    \begin{equation*}
        1 \geq A_{ll}^2 + A_{il}^2 \geq \frac{13-4\sqrt{3}}{16}+\frac{3}{4} > 1.
    \end{equation*}
    This contradiction shows that we must have $\max_j A_{ij}^2 < 3/4$. Therefore,
    \begin{equation*}
        \sum_{j=1}^k A_{ij}^4 - 1 \leq \max_j A_{ij}^2 \sum_{j=1}^k A_{ij}^2 - 1 = -\frac{1}{4} \leq -\frac{1}{4}(1-A_{ii}^2).
    \end{equation*}
\end{itemize}
The proof is finished by combining the two situations. 
\end{proof}

\begin{proof}[Proof of Lemma \ref{lm:concentration-varimax}]
We first construct an $\varepsilon$-net $\cN = \left\{x^{(1)},\ldots,x^{(N)}\right\} \subset \mathbb S^{k-1}$ such that for any $a \in \mathbb S^{k-1}$, there exists some $i_a \in [N]$ with $\|a - x^{(i_a)}\| \leq \varepsilon$. By \cite{vershynin2010introduction}, we can choose such an $\varepsilon$-net with $N \leq (3/\varepsilon)^k$. Then, by triangle inequality,
\begin{equation*}
    \begin{split}
        & \sup_{a \in  \mathbb S^{k-1}} \left|v(Z;a) - \bbE v(Z;a)\right| \\
        \leq & \sup_{a \in  \mathbb S^{k-1}} \left(\left|\bbE v(Z;x^{(i_a)}) - \bbE v(Z;a)\right| + \left|v(Z;a) - v(Z;x^{(i_a)})\right| + \left|v(Z;x^{(i_a)}) - \bbE v(Z;x^{(i_a)})\right| \right) \\ 
        \leq & \underbrace{\sup_{a \in  \mathbb S^{k-1}}\left|\bbE v(Z;x^{(i_a)}) - \bbE v(Z;a)\right|}_{D_1} + \underbrace{\sup_{a \in  \mathbb S^{k-1}} \left|v(Z;a) - v(Z;x^{(i_a)})\right|}_{D_2} +   \underbrace{\max_{i \in N} \left|v(Z;x^{(i)}) - \bbE v(Z;x^{(i)})\right|}_{D_3}.
    \end{split}
\end{equation*}
We specify $\varepsilon=1/n$ and $N \leq (3n)^k$. Next we obtain deterministic bounds for $D_1$ and $D_2$ and a probabilistic bound for $D_3$.
\begin{itemize}
    \item $\sup_{a \in  \mathbb S^{k-1}}\left|\bbE v(Z;x^{(i_a)}) - \bbE v(Z;a)\right|$. Note that for any fixed $a \in \mathbb S^{k-1}$, one can calculate that
    \begin{equation*}
        \bbE v(Z;a) = \bbE\left(\sum_{j=1}^k a_jX_{1j}\right)^4 = (\kappa - 3)\sum_{j=1}^k a_j^4 + 3.
    \end{equation*}
    Therefore, 
    \begin{equation}\label{ineq:D2-bound}
        \begin{split}
            \left|\bbE v(Z;x^{(i_a)}) - \bbE v(Z;a)\right| & = (\kappa-3)\left|\sum_{j=1}^k \left(a_j^4 - (x^{(i_a)}_j)^4\right)\right| \\
            \leq & (\kappa-3) \left(\max_j a_j^2 + (x^{(i_a)}_j)^2\right) \cdot \sum_{j=1}^k\left| a_j^2 - (x^{(i_a)}_j)^2\right| \\
            \leq & 2(\kappa-3)\sum_{j=1}^k\left| a_j^2 - (x^{(i_a)}_j)^2\right|. 
        \end{split}
    \end{equation}
    
    Denote $\delta = a - b$ for any $b \in \mathbb S^{k-1}$. Note that
    \begin{equation*}
        \sum_{j=1}^k\left|a_j-b_j\right| = \sum_{j=1}^k\left|b_j^2+\delta_j^2 + 2b_j\delta_j-b_j^2\right| \leq \|\delta\|_2^2 + 2\sum_{j=1}^k \left|b_j\delta_j\right| \leq \|\delta\|_2^2 + 2\|\delta\|_2,
    \end{equation*}
    where we use Cauchy-Schwardz inequality and the fact $\|b\|=1$ to obtain the last inequality. Now we replace $b$ with $x^{(i_a)}$, then $D_1$ can be further bounded as
    \begin{equation}\label{ineq:D1-bound}
    \begin{split}
        D_1 & \leq 2(\kappa-3)\sum_{j=1}^k\left| a_j^2 - (x^{(i_a)}_j)^2\right| \leq 2(\kappa-3) \max_j |a_j+x_j^{i_a}| \cdot \sum_{j=1}^j \left|a_j - x_j^{(i_a)}\right| \\
        & \leq 12\varepsilon (\kappa - 3) \leq \frac{C}{n}.
    \end{split}
    \end{equation}

    \item $\sup_{a \in  \mathbb S^{k-1}} \left|v(Z;a) - v(Z;x^{(i_a)})\right|$. For any $a,b \in \mathbb S^{k-1}$,  
    \begin{equation*}
        \begin{split}
            \left|v(Z;a) - v(Z;b)\right| & = \frac{1}{n}\left|\sum_{i=1}^n\left(\left(\sum_{j=1}^k a_jZ_{ij}\right)^4 - \left(\sum_{j=1}^kb_jZ_{ij}\right)^4\right) \right| \\
            & \leq \frac{1}{n}\max_{i}\left|\left(\sum_{j=1}^k a_jZ_{ij}\right)^2 + \left(\sum_{j=1}^kb_jZ_{ij}\right)^2\right| \cdot \sum_{i=1}^n \left|\left(\sum_{j=1}^k a_jZ_{ij}\right)^2 - \left(\sum_{j=1}^kb_jZ_{ij}\right)^2 \right| \\
            & \leq \frac{2}{n}\left(\max_{i} \sum_{j=1}^k Z_{ij}^2\right) \cdot \max_i \left|\sum_{j=1}^k (a_j+b_j) Z_{ij}\right| \cdot \sum_{i=1}^n\sum_{j=1}^k|(a_j-b_j)Z_{ij}|\\
            & \leq 4k^{3/2} (\max_{i,j}|Z_{ij}|)^4 \cdot \sum_{j=1}^k |a_j-b_j|  \leq 4k^{2} (\max_{i,k}|Z_{ik}|)^4 \cdot \sqrt{\sum_{j=1}^k (a_j-b_j)^2} \\
            & \leq 4k^2 (\max_{i,j}|X_{ij}|)^4 \varepsilon\leq \frac{4k^2 (\max_{i,j}|X_{ij}|)^4}{n}.
        \end{split}
    \end{equation*}
    
    \item $\max_{i \in N} \left|v(Z;x^{(i)}) - \bbE v(Z;x^{(i)})\right|$. By Lemma \ref{lm:concentration-fourth-moment}, for fixed $x^{(l)}$, since $Y_i^{(l)} := \sum_{j=1}^k x^{(l)}_jX_{ij}$ are i.i.d. mean-zero, variance one, sub-Gaussian random variables, we have
\begin{equation*}
    \bbP\left(v(Z;x^{(j)}) - \bbE v(Z;x^{(j)}) \geq C\left(\sqrt{\frac{t}{n}} + \frac{t^2}{n}\right)\right) \leq e^{-ct}.
\end{equation*}
Taking $t = Ck\log n$ and applying union bounds on $x^{(l)}$ for each $l \in [N]$, it follows that
\begin{equation}\label{ineq:D3-bound}
    \bbP\left(D_3 \leq C\left(\sqrt{\frac{k\log n}{n}} + \frac{k^2\log^2 n}{n}\right)\right) \geq 1-e^{-ck\log n}.
\end{equation}
In the meantime, by the tail bound of sub-Gaussian Extreme values, we have
\begin{equation}\label{ineq:X-max-bound}
    \bbP\left(\max_{i,k} |X_{i,k}| \leq \sqrt{10\log(nk)} \right) \leq 1-\sqrt{\frac{2}{\pi \log n}}n^{-4}.
\end{equation}
\end{itemize}
Combining \eqref{ineq:D1-bound}, \eqref{ineq:D2-bound}, \eqref{ineq:D3-bound} and \eqref{ineq:X-max-bound}, we know that with probability at least $1-n^{-ck}-n^{-4}$ that
\begin{equation*}
    \sup_{a \in  \mathbb S^{k-1}} \left|v(Z;a) - \bbE v(Z;a)\right| \lesssim \sqrt{\frac{k\log n}{n}} + \frac{k^2\log^2 n}{n}.
\end{equation*}
Now the proof is finished.
\end{proof}

\begin{proof}[Proof of Lemma \ref{lm:concentration-fourth-moment}]
Note that $\{X_i^4\}_{i=1}^n$ are sub-Weibull($1/2$) random variables and the result simply comes from the concentration developed by \cite[Theorem 3.1]{kuchibhotla2018moving}.
\end{proof}

\bibliographystyle{plain}
\bibliography{reference}

\begin{thebibliography}{1}

\bibitem{kuchibhotla2018moving}
Arun~Kumar Kuchibhotla and Abhishek Chakrabortty.
\newblock Moving beyond sub-gaussianity in high-dimensional statistics:
  Applications in covariance estimation and linear regression.
\newblock {\em arXiv preprint arXiv:1804.02605}, 2018.

\bibitem{rohe2020vintage}
Karl Rohe and Muzhe Zeng.
\newblock Vintage factor analysis with varimax performs statistical inference.
\newblock {\em Journal of the Royal Statistical Association, Series B}, to
  appear, 2022.

\bibitem{vershynin2010introduction}
Roman Vershynin.
\newblock Introduction to the non-asymptotic analysis of random matrices.
\newblock {\em arXiv preprint arXiv:1011.3027}, 2010.

\end{thebibliography}

\end{document}